\newcommand{\m}{\to}
\providecommand{\Star}{\text{Star}}
\providecommand{\Link}{\text{Link}}
\providecommand{\Cone}{\text{Cone}}
\providecommand{\genus}{\text{genus}}
\providecommand{\valence}{ \text{valence}}
\DeclareMathOperator{\Aut}{Aut}
\DeclareMathOperator{\rank}{rank}
\newcommand{\CV}{\mathbb{A}}
\numberwithin{thmcounter}{section}
\newaliascnt{thmauto}{thmcounter}
\newaliascnt{Defauto}{thmcounter}
\newaliascnt{exauto}{thmcounter}
\newaliascnt{quauto}{thmcounter}
\newaliascnt{lemauto}{thmcounter}
\newaliascnt{propauto}{thmcounter}
\newaliascnt{corauto}{thmcounter}
\newaliascnt{remauto}{thmcounter}
\newaliascnt{convauto}{thmcounter}
\newtheorem*{ThmA'}{Theorem A'}
\newtheorem*{ThmB'}{Theorem B'}
\newtheorem*{ThmC'}{Theorem C'}
\newtheorem{theorem}[thmauto]{Theorem}
\newtheorem{lemma}[lemauto]{Lemma}
\newtheorem{proposition}[propauto]{Proposition}
\newtheorem{corollary}[corauto]{Corollary}
\theoremstyle{definition}
\newtheorem{definition}[Defauto]{Definition}
\newtheorem{remark}[remauto]{Remark}
\newtheorem{convention}[convauto]{Convention}
\title{A degree theorem for the simplicial closure of Auter Space}
\author{Juliet Aygun}
\address{Department of Mathematics, Cornell University, USA}
\email{ja742@cornell.edu}
\author{Jeremy Miller}
\address{Department of Mathematics, Purdue University, USA}
\email{jeremykmiller@purdue.edu}
\thanks{Jeremy Miller was supported in part by Simons Foundation Collaboration Grant 41000749 and NSF Grant DMS 2202943.}
 \date{\today}
\begin{document}

\begin{abstract}
The degree of a based graph is the number of essential non-basepoint vertices after generic perturbation. Hatcher--Vogtmann's degree theorem states that the subcomplex of Auter Space of graphs of degree at most $d$ is $(d-1)$-connected. We extend the definition of degree to the simplicial closure of Auter Space and prove a version of Hatcher--Vogtmann's result in this context.
\end{abstract}
\maketitle


\section{Introduction}


\emph{Auter Space}, denoted $\CV_n$, is a version of Teichm\"uller space for automorphism groups of free groups. \emph{Outer Space}, the analogous construction for Outer automorphism groups, was introduced by Culler-Vogtmann \cite{CulV}. Auter Space is a space of connected based marked metric graphs.   Metric means the edges are equipped with a length (which sums to $1$) and marked means we fix the data of an isomorphism between the fundamental group of the graph and the free group on $n$ letters. Vertices of valence $1$ or $2$ are not allowed. This space has been instrumental for many calculations of the homology of $\text{Aut}(F_n)$ and its subgroups (see e.g. \cite{HVrat,Assembling}). To study homological stability properties of $\text{Aut}(F_n)$, Hatcher--Vogtmann \cite{HatcherVogtmannCerf} proved a ``degree theorem'' for Auter Space. The degree of a based graph $\Gamma \in \CV_n$ is defined to be $$\deg(\Gamma) : =\sum_{v \neq v_0} ( \valence(v) -2 )$$ where the sum is taken over all vertices of $\Gamma$ other than the basepoint vertex $v_0$. Qualitatively, $\deg(\Gamma)$ is the number of non-basepoint vertices of a generic perturbation of $\Gamma$ fixing a neighborhood of the basepoint. Generic in this context implies the valence of every non-basepoint vertex will be $3$. For example, the graph in the center of Figure \ref{Picture} has degree $2$ since a perturbation of it is the graph on the right of Figure \ref{Picture} which has $2$ non-basepoint vertices.

Let $\CV_n^{\leq d}$ denote the subcomplex of $\CV_n$ of graphs of degree $\leq d$. Hatcher--Vogtmann's degree theorem states that $\CV_n^{\leq d}$ is highly connected.

\begin{theorem}[Hatcher--Vogtmann]
 $\CV_n^{\leq d}$ is $(d-1)$-connected.
\end{theorem}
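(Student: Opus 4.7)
The plan is to deduce the theorem from the contractibility of $\CV_n$ via a discrete-Morse-theoretic argument with degree as the Morse function. Totally order the vertex set of $\CV_n$ by first comparing degree and then breaking ties by an auxiliary statistic (e.g.\ the number of non-basepoint vertices, followed by a lexicographic rule on the valence sequence). For a vertex $\Gamma$ of degree $d' > d$, let $L^{\downarrow}(\Gamma)$ be its descending link: the subcomplex of the link of $\Gamma$ in $\CV_n$ spanned by vertices lower in the order. A standard Morse-theoretic argument shows that if $L^{\downarrow}(\Gamma)$ is $(d'-2)$-connected for every such $\Gamma$, then the inclusion $\CV_n^{\leq d} \hookrightarrow \CV_n$ is $d$-connected, whence the contractibility of $\CV_n$ forces $\CV_n^{\leq d}$ to be $(d-1)$-connected.

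The heart of the argument is the connectivity estimate for $L^{\downarrow}(\Gamma)$. Simplices of $\CV_n$ containing $\Gamma$ correspond to pairs $(\tilde\Gamma, F)$ where $\tilde\Gamma$ is a blow-up of $\Gamma$ and $F \subset \tilde\Gamma$ is a forest, with the other vertices of the simplex being the intermediate collapses $\tilde\Gamma/F'$ for $F' \subset F$. A short valence count shows that collapsing an edge between two non-basepoint vertices preserves degree but decreases the vertex count, while collapsing an edge from the basepoint to a non-basepoint vertex of valence $k$ decreases degree by $k-2\geq 1$; symmetrically, blow-ups at non-basepoint vertices preserve degree and blow-ups at the basepoint strictly increase it. One then identifies $L^{\downarrow}(\Gamma)$ up to homotopy with a join, indexed by the non-basepoint vertices $v$ of $\Gamma$, of local splitting complexes $S_v$ that record iterated splittings of the $\valence(v)$ edges incident to $v$. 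Each $S_v$ has the homotopy type of a wedge of $(\valence(v)-3)$-spheres, hence is $(\valence(v)-4)$-connected, and the join-connectivity formula yields
\[
\sum_{v \neq v_0}(\valence(v)-4) + 2(k-1) \;=\; \sum_{v \neq v_0}(\valence(v)-2) - 2 \;=\; d' - 2,
\]
where $k$ is the number of non-basepoint vertices of $\Gamma$.

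The main obstacle is the identification of $L^{\downarrow}(\Gamma)$ with this join. It requires showing that splitting moves at distinct non-basepoint vertices combine independently, handling the interaction between the ``collapse'' and ``blow-up'' components of the link, and verifying that degree-preserving moves contract away within the descending link. Hatcher--Vogtmann carry out this analysis via a careful poset argument on the ideal forests in $\Gamma$ together with an application of Quillen's Theorem A; I would follow this blueprint, reducing each local factor to a tree or partition complex whose connectivity is classical (following Vogtmann and Robinson--Whitehouse). The trickiest bookkeeping will occur at non-basepoint vertices adjacent to $v_0$, where local splittings and basepoint-incident collapses interact and must be shown to contribute in a compatible way.
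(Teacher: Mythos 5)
This theorem is quoted by the paper from Hatcher--Vogtmann \cite{HatcherVogtmannCerf} and is not proved here, so I am judging your outline on its own terms; note also that the original argument is a Cerf-theoretic deformation (canonically splitting vertices to push families into the low-degree region), not the descending-link argument you propose, so you are on a genuinely different route. The route is not hopeless, but as written its central computation fails. The complex of nontrivial blow-ups (iterated splittings) at a vertex of valence $m$ is the complex of trees with $m$ leaves, which by Vogtmann and Robinson--Whitehouse is a wedge of spheres of dimension $m-4$, hence only $(m-5)$-connected --- not a wedge of $(m-3)$-spheres as you claim; in particular it is \emph{empty} when $m=3$. With the correct value, your join over the $k$ non-basepoint vertices is only $(d'-k-2)$-connected, so the arithmetic that lands exactly on $d'-2$ collapses. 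The failure is stark in the generic case: if every non-basepoint vertex of $\Gamma$ is trivalent (so $d'=k$), every $S_v$ is empty and their join is empty, yet the Morse argument still needs the descending link to be $(d'-2)$-connected. So the descending link cannot be identified with the join of the splitting complexes alone; whatever connectivity is missing must come from the degree-decreasing moves, namely collapses of edges joining $v_0$ to adjacent vertices (and their interaction with splittings at those vertices). You relegate exactly this interaction to ``the trickiest bookkeeping,'' but it is not bookkeeping --- it is the heart of the proof, since without it the connectivity bound is off by $k$ and the identification of $L^{\downarrow}(\Gamma)$ is false as stated.

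There is also a structural point you should not gloss over: degree is an invariant of the combinatorial graph, i.e.\ of an open simplex of $\CV_n$, not of a vertex of $\CV_n$, so your Morse function really lives on the spine (the poset of marked graphs under forest collapse). That is fine up to homotopy, but you then owe an argument (as Hatcher--Vogtmann supply) that $\CV_n^{\leq d}$ deformation retracts onto the degree-at-most-$d$ part of the spine, and you must fix the direction of your tie-breaking statistic consistently: with ``more vertices $=$ higher'' the degree-preserving blow-ups you want in the descending link become ascending, while with ``more vertices $=$ lower'' the descending link is the join of the blow-up part with the basepoint-collapse part, which returns you to the unresolved interaction above.
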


\begin{figure}[!ht] \begin{center}\scalebox{.75}{\includegraphics{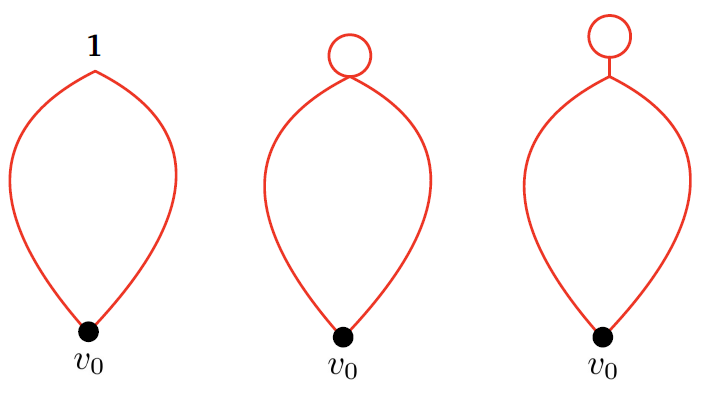}}\end{center}
\label{Picture}
\caption{Graphs of degree $2$}
\end{figure}  

Our main result is a version of this theorem for the simplicial closure of Auter Space. In this introduction, we will describe a heuristic model of the simplicial closure of Auter Space as a space of graphs. However, in the body of the paper, we will use a model for the simplicial closure of Auter Space in terms of a simplicial complex of sphere systems in a certain $3$-manifold (see Definition \ref{spheresystemdef}). The connection between spaces of graphs and complexes of sphere systems dates back to work of Hatcher \cite[Appendix]{HatcherStab}. See Br\"uck \cite[Section 4.7.1]{Bthesis} for a discussion of sphere system models of the simplicial closure of Outer Space.

 Recall that each edge in a graph in $\CV_n$ has a length, and the sum of the lengths is required to be equal to $1$. If the edge is not a loop, then the length of the edge is allowed to converge to $0$ and the associated edge in the graph is collapsed. The simplicial closure of Auter Space, denoted $\overline{\CV}_n$, is a partial compactification where loops are allowed to collapse as well. Vertices are labeled with a natural number (genus) indicating how many loops have been shrunk (as well as extra data relating to the marking). For example, as the top loop in the middle graph in Figure \ref{Picture} shrinks, it converges to the labeled graph on the left in the simplicial closure. The genus of a vertex should be though of as the number of infinitesimal loops based at the vertex. Vertices of valence $1$ or $2$ are now allowed if they have positive genus. 

 We define the degree of a graph $\Gamma$ in $\overline{\CV}_n$ to be $$\deg(\Gamma) : =\sum_{v \neq v_0} ( \valence(v)+2 \cdot \genus(v) -2 ).$$ We can still think of $\deg(\Gamma)$ as the number of non-basepoint vertices of a generic perturbation of $\Gamma$. For example, a perturbation of the graph on left of Figure \ref{Picture} is the graph on the right which has $2$ non-basepoint vertices. The factor of $2$ on the genus term in the sum reflects the fact that loops contribute twice to the valence and the genus of a vertex is viewed as the number of infinitesimal loops based at that vertex.

Analogously, we define $\overline{\CV}_n^{\leq d}$ to be the subcomplex of $\overline{\CV}_n$ consisting of graphs with degree $\leq d$. Our main result is the following which can be thought of as a ``degree theorem'' for the simplicial closure of Auter Space. 

\begin{theorem}\label{MainAuterSpace}
 $\overline{\CV}_n^{\leq d}$ is $(n+d-2)$-connected.
\end{theorem}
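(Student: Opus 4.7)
The plan is to work in the sphere system model of $\overline{\CV}_n$ introduced in Definition~\ref{spheresystemdef}, in which simplices correspond to isotopy classes of sphere systems in a model $3$-manifold with $n$ handles. I would first translate the degree function to this combinatorial setting, checking that $\overline{\CV}_n^{\leq d}$ is cut out by a local condition on sphere systems so that the filtration by degree behaves cleanly.

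The proof then proceeds by induction on $d$, modeled on the Hatcher--Vogtmann strategy but with additional moves supplied by the closure. For the base case $d = 0$, the subcomplex $\overline{\CV}_n^{\leq 0}$ consists of graphs with no essential non-basepoint vertices—the entire genus must sit at the basepoint—and its $(n-2)$-connectivity should follow from Hatcher's classical connectivity theorem for the sphere complex of the model $3$-manifold. For the inductive step, assuming $\overline{\CV}_n^{\leq d-1}$ is $(n+d-3)$-connected, I would run a bad simplex argument to push a map $S^k \to \overline{\CV}_n^{\leq d}$ with $k \leq n+d-2$ into $\overline{\CV}_n^{\leq d-1}$, after which the inductive hypothesis applies.

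The main technical ingredient, and the main obstacle, is the required local connectivity statement. For a ``bad'' simplex $\sigma \subset \overline{\CV}_n^{\leq d}$ (one whose vertices all have degree exactly $d$), the relevant bad link $L_\sigma$ consists of simplices $\tau$ in the link of $\sigma$ such that $\sigma * \tau$ still has degree $\leq d$. To run the standard bad simplex machinery one needs $L_\sigma$ to be approximately $(n + d - 2 - \dim\sigma)$-connected. The natural approach is to exhibit $L_\sigma$ as a join of two sub-complexes: a ``reduction'' complex of valence-reducing blow-ups at the degree-$d$ vertices, contributing a $(d-2)$-connected factor as in Hatcher--Vogtmann; and a ``closure'' complex of genus-shifting moves parameterized by splittings of the infinitesimal-loop spheres, contributing an $(n-2)$-connected factor via Hatcher-style sphere-complex connectivity. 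Since the join of an $a$-connected and $b$-connected complex is $(a + b + 2)$-connected, these two pieces combine to give the needed bound. Establishing this join decomposition rigorously, and correctly handling the interplay between the basepoint vertex, the valence-reducing moves of classical Auter space, and the new infinitesimal-loop moves supplied by the simplicial closure, is where I expect the bulk of the technical work to lie.
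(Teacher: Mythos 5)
Your broad strategy (pass to the sphere system model, run a bad-simplex/pushing argument, and control links via joins of known sphere complexes) is the same as the paper's, but as written there are two genuine gaps. First, the inductive scheme fails in the top dimension: if you homotope a map $S^{k}\to\overline{\CV}_n^{\le d}$ with $k=n+d-2$ into $\overline{\CV}_n^{\le d-1}$, the inductive hypothesis only gives $(n+d-3)$-connectivity of $\overline{\CV}_n^{\le d-1}$, so the sphere need not bound there and you cannot conclude. The paper does not induct on $d$ at all: it uses contractibility of the ambient complex $S(M_{n,1})$ (Theorem \ref{Scontract}) to fill the sphere by a disk in $S(M_{n,1})$, and then modifies the filling disk rel its boundary, coning off stars of maximal-dimensional preimages of maximal-degree bad simplices until the disk lands in $S^{\le d}(M_{n,1})$ (Theorem \ref{mainthm1}). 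Relatedly, your base case attribution is off: $\overline{\CV}_n^{\le 0}\cong Y(M_{n,1})$, and its $(n-2)$-connectivity is Hatcher--Vogtmann's nontrivial theorem on simplexwise non-separating sphere systems (Theorem \ref{Yconn}), not a consequence of Hatcher's contractibility of the full sphere complex.

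Second, the key local connectivity statement is both left open and set up around the wrong notion of badness. Degree is not a vertexwise quantity: a sphere can be non-separating on its own yet contribute to the degree of a larger system, so ``simplices all of whose vertices have degree exactly $d$'' is not the right core. The paper organizes everything around the \emph{pillar} (the spheres dual to edges at the basepoint vertex): degree is carried entirely by the pillar (Lemma \ref{removeVertex}, Corollary \ref{degEqpillar}), the good link of a bad pillar $c$ is identified as $Y(O^c)\star S(N^c_1)\star\cdots\star S(N^c_k)$ (Lemma \ref{badnessmaps}), and its connectivity is $(n+\deg(c)-\dim(c)-3)$ (Lemma \ref{pillarlinkconnect}), proved from Theorem \ref{Yconn} for the basepoint piece, contractibility when some complementary piece has positive genus, and, crucially, Hatcher--Wahl's $(b-5)$-connectivity of $S(M_{0,b})$ (Theorem \ref{SM0}) for the genus-zero pieces, together with Euler-characteristic bookkeeping relating $\deg(c)$, $\dim(c)$, and the number of components. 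Your proposed join of a $(d-2)$-connected ``valence-reducing'' factor with an $(n-2)$-connected ``genus-shifting'' factor gives a bound independent of $\dim\sigma$ and cannot be correct in general: when the pillar is large, little genus remains in the basepoint component, so the non-separating factor is far less connected than $(n-2)$; and the Hatcher--Wahl input, which is indispensable for the genus-zero complementary pieces, is absent from your plan.
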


	Work of Hatcher \cite{HatcherStab} gives an alternative model of  ${\CV}_n$ as a complex of $2$-sphere systems in a certain $3$-manifold. This $3$-manifold perspective also gives a convenient model for $\overline{\CV}_n$. See Definition \ref{Sd} for a rigorous definition of the spaces  $\overline{\CV}_n^{\leq d}$. The $d=0$ case of Theorem \ref{MainAuterSpace}
is due to Hatcher--Vogtmann \cite[Theorem 2.5]{HVfreeFactors} as $\overline{\CV}_n^{\leq 0}$ is isomorphic to a complex of simplexwise non-separating sphere systems. 

\begin{remark}
Hatcher--Vogtman proved their degree theorem to study homological stability properties of $\Aut(F_n)$ \cite{HatcherVogtmannCerf}. We do not know of a direct application our degree theorem for the simplicial closure to homological stability. One possible motivation for Theorem \ref{MainAuterSpace} is the following. The complex  $\overline{\CV}_n^{\leq 1}$ is very similar to a complex used by Himes, the second author, Nariman, and Putman  \cite{HMNP} to study the first homology group of $\Aut(F_n)$ with coefficients in the top homology of Hatcher--Vogtmann's complex of free factors \cite{HVfreeFactors} $FC_n$. More specifically,  Himes, the second author, Nariman, and Putman used high connectivity of a variant of $\overline{\CV}_n^{\leq 1}$ to show $H_1(\Aut(F_n); \widetilde H_{n-2}(FC_n)) \cong 0$ for $n \geq 2$. Possibly, high connectivity of the complexes $\overline{\CV}_n^{\leq d}$ could be used to show $H_d(\Aut(F_n); \widetilde H_{n-2}(FC_n))$ vanishes for  $n$ sufficiently large compared with $d$. 
\end{remark}

\subsection*{Acknowledgments}
We thank Benjamin Br\"uck, Sam Nariman, Peter Patzt, Andrew Putman, and Robin Sroka for helpful conversations.

\section{Background}
In this section, we define certain complexes of sphere systems in 3-manifolds. We also discuss known connectivity results. 

\subsection{Sphere complexes}

We are interested in $2$-spheres in the following type of $3$-manifolds.

\begin{definition}
Let $M_{n,b}$ be the manifold $\#_n S^1 \times S^2$ with $b$ disjoint, open 3-balls removed.
\end{definition}
 
Without explicitly saying, we will often pick disjoint representatives of spheres and no property in this paper depends on this choice. We will use the term \emph{genus} of $M_{n,b}$ or a graph to mean the rank of its fundamental group. We will always take $b \geq 1$ and fix a basepoint $p_0 \in \partial M_{n,b}$.

\begin{definition}
Let $S(M_{n,b})$ be the simplicial complex whose vertices are isotopy classes of embedded 2-spheres not isotopic to a boundary component or bounding a 3-ball. A collection of $p+1$ vertices $\{S_0,...,S_p\}$ forms a $p$-simplex $[S_0,...,S_p]$ if the spheres can be isotoped to be disjoint.
\end{definition}

The following subcomplex of $S(M_{n,b})$ imposes a simplexwise non-separating condition. 

\begin{definition} \label{spheresystemdef}
Let $Y(M_{n,b})$ be the subcomplex of $S(M_{n,b})$ where $[S_0,...,S_p]$ is a $p$-simplex of $Y(M_{n,b})$ if $M_{n,b} \backslash \cup_i S_i$ is connected. 
\end{definition}

Given a simplex $\sigma=[S_0,...,S_p] $ of $S(M_{n,b})$, $M_{n,b} \backslash \cup_i S_i $ is a disjoint union of connected manifolds which are interiors of manifolds with boundary. We will often denote the associated manifold with boundary by $M \backslash \sigma$.

Any simplex of $S(M_{n,b})$ has a corresponding \emph{dual graph}. 

\begin{definition}
Let $\sigma = [S_0,...,S_p]$ be a simplex of $S(M_{n,b})$. Define the dual graph of $\sigma$, denoted $\Gamma(\sigma)$, to be the graph with vertices connected components of $M_{n,b} \backslash \sigma$ and edges $\{S_0,...,S_p\}$. The edge $S_i$ is attached to the not necessarily distinct vertices corresponding to connected components it is adjacent to in $M_{n,b} \backslash \sigma$. We label each vertex $v$ with the genus of the associated connected component and denote this number by $\genus(v)$.
\end{definition}

See Figure \ref{CorePic} for examples of simplices of $S(M_{n,b})$ and their associated dual graphs. 

\begin{remark}
The association of a simplex of $S(M_{n,1})$ to its dual graph gives a correspondence between $S(M_{n,1})$ and the space $\overline{\CV}_n$ heuristically described in the introduction. The simplicial coordinates in $S(M_{n,1})$ giving edge lengths in $\overline{\CV}_n$ (see Hatcher \cite[Appendix]{HatcherStab} and Br\"uck \cite[Section 4.7.1]{Bthesis}).
\end{remark}

Note that if $\sigma$ is a face of $\tau$, then $\Gamma(\sigma)$ is obtained from $\Gamma(\tau)$ by collapsing the edges corresponding to the vertices of $\tau \backslash \sigma$. 

An edge is called a \emph{self-loop} if it is attached to a single vertex. Let $\tau=[S_0,\ldots,S_p]$ and $\sigma=[S_0,\ldots,\hat S_i,\ldots,S_p]$. The edge $S_i$ in $\Gamma(\tau)$ corresponds to a self-loop if and only if $M_{n,b} \backslash \sigma$ and  $M_{n,b} \backslash \tau$ have the same number of connected components. 

A simplex $\sigma$ of $S(M_{n,b})$ is a simplex of $Y(M_{n,b})$ if and only if $\Gamma(\sigma)$ is a rose (a graph with a single vertex).

Define the genus of $M_{n,b} \backslash \sigma$ to be the sum of the genera of all of the connected components. The following appears in Hatcher--Vogtmann  \cite[Proof of Theorem 2.5]{HVfreeFactors}.

\begin{proposition} \label{rank}
Given $\sigma$ a simplex of $S(M_{n,b}),$ if $g$ is the genus of $M_{n,b} \backslash \sigma$, then $$g = n - \rank(\pi_1(\Gamma(\sigma))).$$ 
\end{proposition}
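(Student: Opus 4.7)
The plan is to apply van Kampen's theorem (in its graph-of-groups form) to the decomposition of $M_{n,b}$ induced by the sphere system $\sigma$, and then invoke uniqueness of rank for free groups.

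Concretely, I would cover $M_{n,b}$ by a tubular neighborhood $N(\sigma)$ of $\sigma$ --- a disjoint union of copies of $S^2 \times (-1,1)$, one per sphere --- together with a slight thickening of $M_{n,b} \setminus \sigma$. The components of the complement are indexed by the vertices $v$ of $\Gamma(\sigma)$, and by our definition of genus each component $P_v$ has $\pi_1(P_v) \cong F_{\genus(v)}$. Applying van Kampen, $\pi_1(M_{n,b})$ is the fundamental group of the graph of groups whose underlying graph is $\Gamma(\sigma)$, whose vertex groups are the $F_{\genus(v)}$, and whose edge groups are $\pi_1(S^2) = 1$. Since all edge groups are trivial, this fundamental group collapses to a free product, giving
$$ \pi_1(M_{n,b}) \;\cong\; \pi_1(\Gamma(\sigma)) * \Bigl( \ast_v \, F_{\genus(v)} \Bigr) \;\cong\; F_{\rank(\pi_1(\Gamma(\sigma))) + g}, $$
where $g = \sum_v \genus(v)$ is the total genus of $M_{n,b} \setminus \sigma$. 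Since $\pi_1(M_{n,b}) \cong F_n$ (removing open $3$-balls does not change $\pi_1$ in dimension $\geq 3$), uniqueness of rank for free groups yields $n = \rank(\pi_1(\Gamma(\sigma))) + g$, which rearranges to the claim.

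The only subtlety I anticipate is handling non-separating spheres, i.e.\ self-loops of $\Gamma(\sigma)$: such a sphere contributes two boundary components to the \emph{same} piece $P_v$, so the graph of groups has a genuine loop rather than an edge between distinct vertices. Via Bass--Serre theory each such loop contributes an HNN stable letter which, because the edge group is trivial, becomes a free generator; these stable letters, together with the generators coming from a spanning tree, account precisely for the free factor $\pi_1(\Gamma(\sigma))$ in the displayed isomorphism. Once this bookkeeping is in hand, the rank count above is unambiguous and the proof concludes as stated.
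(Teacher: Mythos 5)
Your argument is correct and is essentially the standard one: the paper itself gives no proof of this proposition, simply citing Hatcher--Vogtmann's proof of their Theorem 2.5, which rests on the same van Kampen/graph-of-groups decomposition along the spheres with trivial edge groups $\pi_1(S^2)=1$, yielding $\pi_1(M_{n,b}) \cong \bigl(\ast_v \pi_1(P_v)\bigr) \ast \pi_1(\Gamma(\sigma))$ and the rank count $n = g + \rank(\pi_1(\Gamma(\sigma)))$. The one point worth making explicit is why $\pi_1(P_v)$ is free of rank $\genus(v)$ (the paper's definition of genus is only the rank of $\pi_1$): this follows because each complementary piece is itself of the form $M_{x,y}$, a fact the paper uses elsewhere, or alternatively one can bypass freeness of the vertex groups entirely by invoking Grushko's theorem for the rank of the free product.
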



The \emph{valence} of a vertex $v$ in $\Gamma(\sigma)$ is the number of half-edges out of $v$. For example, non-self-loops contribute $1$ to the valence of $2$ vertices and self-loops contribute $2$ to the valence of $1$ vertex. 

We now will state the definition of degree in the context of sphere systems. 

\begin{definition} Let $\sigma$ be a simplex of $S(M_{n,1})$.
The \emph{degree} of $\sigma$ is $$\deg(\sigma) = \sum_{v \neq v_0, \, v \in \Gamma(\sigma)} (\valence(v) + 2 \cdot \genus(v)- 2) $$ where $v_0$ denotes the vertex corresponding to the basepoint connected component.
\end{definition}

\begin{remark}
We only define degree for simplices in $S(M_{n,b})$ for $b=1$ as it is unclear if our definition is the correct notion for a higher number of boundary components. We will need the complexes $S(M_{n,b})$ and $Y(M_{n,b})$ for $b>1$, but our main theorem only concerns the case $b=1$. The only time we use the assumption that $b=1$ directly is in Lemma \ref{removeVertex}, although this lemma is key for most of what follows.
\end{remark}

\begin{definition} \label{Sd}
Let $S^{\leq d}(M_{n,1})$ be the subcomplex of $S(M_{n,1})$ consisting of simplices $\sigma = [S_0,...,S_p]$ of $S(M_{n,1})$ with $\deg(\sigma) \leq d$. 
\end{definition}

Our model for the space $\overline{\CV}_n^{\leq d}$ described in the introduction is $S^{\leq d}(M_{n,1})$. 

Corollary \ref{complex} verifies that $S^{\leq d}(M_{n,1})$ is a simplicial complex as opposed to just a collection of simplices. Note that $S^{\leq 0}(M_{n,1})$ is isomorphic to  $Y(M_{n,1})$ as both complexes consist entirely of sphere systems whose associated dual graphs have no non-basepoint vertices (roses). This uses the fact that every non-basepoint vertex in a dual graph must have valence at least $3$ or be associated to a component with positive genus.

\subsection{Known connectivity results}
We now review some connectivity results about sphere systems that will be used to prove our main theorem. 

\begin{theorem} \cite[Theorem 2.1]{HatcherStab} \label{Scontract}
$S(M_{n,b})$ is contractible for all $n,b \geq 1$.
\end{theorem}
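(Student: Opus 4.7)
The plan is to prove contractibility via a standard surgery argument, showing $\pi_k(S(M_{n,b})) = 0$ for all $k$. The key observation is that the star $\Star(S_0)$ of any vertex $S_0$ of $S(M_{n,b})$ is a simplicial cone and hence contractible, so it suffices to homotope any simplicial map $f\colon K \to S(M_{n,b})$ from a finite simplicial complex $K$ (for instance a triangulated $k$-sphere) into $\Star(S_0)$ for some fixed choice of $S_0$.

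First I would fix a non-separating sphere $S_0$ in $M_{n,b}$, which exists since $n \geq 1$. After an ambient isotopy, I would arrange that each sphere appearing as a vertex of $f(K)$ meets $S_0$ transversely and in globally minimal position, so that no bigons occur between $S_0$ and any image sphere. Define the complexity $c(f)$ to be the total number of intersection circles between $S_0$ and the image spheres. If $c(f) = 0$ the image lies in $\Star(S_0)$ and we are done; otherwise, pick a circle $C$ of intersection that is innermost on $S_0$, bounding a disk $D \subset S_0$ whose interior is disjoint from every image sphere. The circle $C$ lies on some image sphere $S$ and divides $S$ into two disks $D_1$ and $D_2$, and surgery produces two spheres $S' = D \cup D_1$ and $S'' = D \cup D_2$, with $D$ pushed slightly to opposite sides of $S_0$, each having strictly fewer intersections with $S_0$ than $S$.

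The surgery step replaces every simplex $\sigma$ of $f(K)$ containing $S$ by the two simplices $\sigma' = (\sigma \setminus \{S\}) \cup \{S'\}$ and $\sigma'' = (\sigma \setminus \{S\}) \cup \{S''\}$. The crucial simplicial fact is that the combined set $\{S, S', S''\} \cup (\sigma \setminus \{S\})$ forms a simplex of $S(M_{n,b})$: after small isotopies the three spheres $S, S', S''$ are pairwise disjoint, and the surgery preserves disjointness from the remaining spheres of $\sigma$. Using this enlarged simplex as a bridge, I would subdivide $K$ appropriately to replace $f$ by a homotopic simplicial map of strictly smaller complexity. Iterating reduces the complexity to zero in finitely many steps, producing a null-homotopy of $f$. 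The main obstacle is handling the degenerate cases in which $S'$ or $S''$ bounds a $3$-ball, is isotopic to a boundary component, or coincides up to isotopy with an existing vertex of $\sigma$; in each such case one argues that $S$ can in fact be replaced directly by an isotopic copy of the non-degenerate surgered sphere, which still strictly decreases complexity. Coordinating these cases over all simplices of $K$ simultaneously, so that the surgery flow descends to a well-defined simplicial homotopy of $f$, forms the technical heart of the argument.
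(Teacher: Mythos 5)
This statement is not proved in the paper at all: it is quoted as Theorem 2.1 of Hatcher's paper \cite{HatcherStab}, so the relevant comparison is with Hatcher's surgery argument, and your plan is indeed a sketch of that style of argument. The complexity-reduction step you describe is fine as far as it goes: an innermost circle $C$ on $S_0$ bounds a disk $D$ disjoint from all image spheres, the surgered spheres $S'=D\cup D_1$ and $S''=D\cup D_2$ each meet $S_0$ in strictly fewer circles than $S$, and they stay disjoint from every sphere that was disjoint from $S$.

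The genuine gap is that everything you have deferred is exactly where the theorem lives. First, you assume you can put \emph{all} image spheres simultaneously in minimal (bigon-free) position with respect to $S_0$ while preserving the pairwise disjointness inside every image simplex; this simultaneous normalization is not automatic and in Hatcher's proof is handled by the theory of normal form with respect to a fixed \emph{maximal} sphere system (together with its uniqueness up to homotopy), not by an ad hoc isotopy. Second, the degenerate cases are not minor bookkeeping: $S'$ or $S''$ may bound a ball, be parallel to a boundary sphere, be isotopic to $S$, to each other, or to another vertex of $\sigma$, and in each case the ``bridge simplex'' $\{S,S',S''\}\cup(\sigma\setminus\{S\})$ you rely on is not a simplex of $S(M_{n,b})$; your one-line remedy (``replace $S$ by an isotopic copy of the non-degenerate surgered sphere'') is only argued in the easiest of these cases and is not justified, e.g., when $S'$ is boundary-parallel. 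Third, and most seriously, you explicitly leave open how the surgeries performed on different spheres, appearing in many simplices of $K$ at once, are organized into a single well-defined simplicial homotopy of $f$ (``the technical heart of the argument''). Making the surgery process canonical and compatible across faces is precisely the delicate point that forces Hatcher to set up the normal-form machinery and a carefully ordered surgery/deformation procedure; a proof that omits it has not yet proved contractibility, only described a plausible strategy.
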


\begin{theorem} \cite[Theorem 2.5]{HVfreeFactors} \label{Yconn}
$Y(M_{n,b})$ is $(n-2)$-connected for all $n \geq 0$ and $b \geq 1$.
\end{theorem}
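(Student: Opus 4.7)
The plan is to prove $Y(M_{n,b})$ is $(n-2)$-connected by induction on $n$, with the base cases $n \leq 1$ being essentially trivial (for $n = 0$, $(-2)$-connectivity is vacuous; for $n = 1$, $Y(M_{1,b})$ is nonempty via any meridian sphere of the $S^1 \times S^2$ factor). For the inductive step, I would combine Theorem \ref{Scontract} (contractibility of $S(M_{n,b})$) with a bad-simplex surgery argument to show that the inclusion $Y(M_{n,b}) \hookrightarrow S(M_{n,b})$ is $(n-1)$-connected; together with the contractibility of the ambient space, this gives the desired $(n-2)$-connectivity of $Y(M_{n,b})$ via the long exact sequence of the pair.

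Call a simplex $\tau$ of $S \setminus Y$ \emph{bad}. Given a simplicial map $\bar f: D^{k+1} \to S(M_{n,b})$ with $\bar f|_{\partial D^{k+1}} \subset Y(M_{n,b})$ for some $k \leq n-2$, I would eliminate bad simplices in the image one at a time by star replacement, measuring badness by a lexicographic pair (say, the number of components of $M_{n,b} \setminus \tau$ together with the dimension of $\tau$) so that each replacement strictly reduces the multiset of badness values in the image. For a maximally bad image simplex $\tau$, the replacement inserts a simplicial disk in $\Lk_S(\tau)$ mapping into the subcomplex where each factor of the join decomposition of the link lies in $Y$ of its piece.

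The main obstacle is establishing the requisite connectivity of this replacement subcomplex. Cutting $M_{n,b}$ along a $p$-simplex $\tau$ produces pieces $M_{n_1, b_1}, \ldots, M_{n_r, b_r}$ with $r \geq 2$ (since $\tau$ is bad), and Proposition \ref{rank} combined with the graph-theoretic identity $\rank \pi_1(\Gamma(\tau)) = p - r + 2$ yields $\sum n_i = n + r - p - 2$. The link decomposes as $\Lk_S(\tau) \cong S(M_{n_1, b_1}) * \cdots * S(M_{n_r, b_r})$, and its subcomplex $Y(M_{n_1, b_1}) * \cdots * Y(M_{n_r, b_r})$ is, by the inductive hypothesis and the join connectivity formula, $(\sum (n_i - 2) + 2(r-1)) = (n + r - p - 4)$-connected. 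The link of the preimage of $\tau$ in $D^{k+1}$ has dimension $k - p$, and the bounds $r \geq 2$ and $k \leq n - 2$ give $n + r - p - 4 \geq k - p$, which is exactly the connectivity needed to carry out the star replacement. Iterating until no bad simplices remain produces the desired extension $\bar f : D^{k+1} \to Y(M_{n,b})$ of $f$.
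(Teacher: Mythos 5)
First, a framing remark: the paper does not prove this statement at all --- it is quoted from Hatcher--Vogtmann \cite[Theorem 2.5]{HVfreeFactors} --- so there is no internal proof to compare against. Your outline follows the same general template that the paper uses for its own Theorem~\ref{mainthm1} (contractibility of $S(M_{n,b})$, surgery on bad simplices, the join decomposition of links, and the count $\sum_i n_i = n+r-p-2$ via Proposition~\ref{rank}), and the numerology in your final paragraph is correct. However, two steps do not work as written.

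First, the induction on $n$ alone is circular. Cutting along a bad simplex need not lower the genus of every piece: for $b\geq 2$, a single essential sphere can split off a genus-zero piece containing two boundary spheres of $M_{n,b}$, leaving a complementary piece of genus exactly $n$. In that case the ``good'' subcomplex is $Y(M_{n,b'})\star Y(M_{0,3})=Y(M_{n,b'})$ (the second factor is empty), and the connectivity you need for it is precisely $(n-2)$ --- an instance of the very statement being proved, not covered by induction on $n$. You need a secondary induction, e.g.\ on $b$, together with a proof that whenever a piece retains genus $n$ it has strictly fewer boundary spheres; this is true (genus can be preserved only when the dual graph is a tree and all other pieces are genus zero, and such pieces have at least three boundary spheres, forcing the genus-$n$ piece to have at most $b-(r-1)$ boundary spheres), but it is a nontrivial verification that your sketch omits.

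Second, the progress measure fails. With badness of an image simplex $\tau$ given by the lexicographic pair (number of components of $M\setminus\tau$, $\dim\tau$), the star replacement can create image simplices that are strictly \emph{worse}. Concretely, in $M_{3,1}$ take $\tau=\{S_1,S_2\}$ with $S_1$ separating and $S_2$ non-separating in its piece, so $\tau$ has badness $(2,1)$; the new simplices have images $\alpha\star\beta$ with $\beta\subsetneq\tau$ and $\alpha$ in the $Y$-join of the pieces of $M\setminus\tau$, and one checks (via dual graphs, as in the paper's discussion after Definition~\ref{spheresystemdef}) that $M\setminus(\alpha\cup\beta)$ has as many components as $M\setminus\beta$; taking $\beta=\{S_1\}$ and $\alpha=\{T_1,T_2\}$ with one non-separating sphere in each piece gives a new image simplex of badness $(2,2)$. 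Moreover, the move deletes only one preimage of $\tau$, so $\tau$ itself generally remains in the image. Hence ``each replacement strictly reduces the multiset of badness values'' is false, and the iteration is not yet known to terminate. The fix is the more careful bookkeeping the paper itself uses: isolate the purely separating part of a system (the analogue of its pillars), prove the analogues of Lemma~\ref{badnessmaps} and Lemma~\ref{lowdeg} for the component count, and induct on the pair (maximal badness occurring in the image, maximal dimension of a \emph{preimage} simplex mapping onto such a maximally bad purely separating simplex), as in the proof of Theorem~\ref{mainthm1}.
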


The following is Hatcher--Wahl \cite[Theorem 3.1 (1)]{HWboundary} in the case $C = \emptyset$ and $k=0$. Note that the complex Hatcher--Wahl consider involves spheres and disks, but no disks are allowed when $C = \emptyset$. 

\begin{theorem}\cite[Theorem 3.1 (1)]{HWboundary}
$S(M_{0,b})$ is $(b-5)$-connected for all $b \geq 1$. \label{SM0}
\end{theorem}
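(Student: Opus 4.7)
The plan is to induct on $b$, identifying $S(M_{0,b})$ with a combinatorial complex of trees and exploiting the structure of vertex links.

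Since $\pi_1(M_{0,b})=1$, every embedded essential $2$-sphere in $M_{0,b}$ separates it, so the dual graph $\Gamma(\sigma)$ of any simplex $\sigma\in S(M_{0,b})$ is a tree. Each vertex of $\Gamma(\sigma)$ corresponds to a component of the form $M_{0,k}$, and essentiality of every sphere in $\sigma$ translates into the condition that each vertex of $\Gamma(\sigma)$ has total incidence (edges of $\Gamma(\sigma)$ plus original boundary components of $M_{0,b}$ contained in that piece) at least $3$. Thus $S(M_{0,b})$ is combinatorially isomorphic to the simplicial complex whose $p$-simplices are isomorphism classes of trees with $b$ labeled leaves, $p+1$ internal edges, and internal vertices of valence $\geq 3$, ordered by edge contraction.

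I would verify the base cases $b\leq 4$ directly: $S(M_{0,b})$ is empty for $b\leq 2$ (so $(b-5)$-connectivity holds vacuously), three isolated points for $b=3$ (also automatic at $(-2)$-connectivity), and non-empty for $b=4$ (e.g.\ any $2$--$2$ split of the boundaries), giving the required $(-1)$-connectivity. For the inductive step with $b\geq 5$, fix a vertex $S\in S(M_{0,b})$; it partitions the boundary spheres into a pair of subsets of sizes $b_1,b_2$ with $b_1+b_2=b+2$ and $b_1,b_2\geq 3$. A sphere disjoint from $S$ is essential in $M_{0,b}$ exactly when its restriction to the piece of $M_{0,b}\setminus S$ containing it is essential, and so $\Lk(S,S(M_{0,b}))\cong S(M_{0,b_1})\ast S(M_{0,b_2})$. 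By the inductive hypothesis and the standard join-connectivity estimate, this link is $\bigl((b_1-5)+(b_2-5)+2\bigr)=(b-6)$-connected.

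The main obstacle is promoting this local $(b-6)$-connectivity of links to the global $(b-5)$-connectivity we want. The plan is to use a surgery or ``bad simplex'' argument in the spirit of Hatcher--Vogtmann \cite{HVfreeFactors}: given a simplicial map $f\colon S^k\to S(M_{0,b})$ with $k\leq b-5$, choose a vertex $S$ and iteratively null-homotope the subcomplex of $f(S^k)$ lying outside $\St(S)$ by filling it in through $\Lk(S)$, then coning off the result into the contractible star $\St(S)$. Executing this globally---organizing the surgeries so that new ``bad'' simplices introduced at each step are strictly simpler, and maintaining compatibility with the sphere-system structure throughout---is the technical heart of the argument and is where Hatcher's sphere-surgery machinery from \cite{HatcherStab} is essential.
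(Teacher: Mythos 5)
The paper does not actually prove this statement: it quotes it from Hatcher--Wahl \cite{HWboundary} (Theorem 3.1(1), with $C=\emptyset$ and $k=0$), so there is no internal argument to compare yours against. Your reduction of $S(M_{0,b})$ to the complex of trees with $b$ labeled leaves and internal vertices of valence at least $3$ is correct (since $\pi_1(M_{0,b})=1$, every essential sphere separates and is determined by a partition of the boundary spheres into two sets of size at least $2$), and your link computation $\Lk(S)\cong S(M_{0,b_1})\star S(M_{0,b_2})$ with $b_1+b_2=b+2$, which is $(b-6)$-connected by induction and Proposition \ref{join}, is also right. One small slip: $S(M_{0,3})$ is empty, not three isolated points, because a sphere cutting off two of the three boundary spheres is boundary parallel; the three isolated points occur at $b=4$. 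This does not affect the low cases, where $(b-5)$-connectivity is vacuous or just nonemptiness.

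The genuine gap is the final step, which you yourself flag as ``the technical heart.'' Knowing that vertex links are $(b-6)$-connected does not by itself give $(b-5)$-connectivity of this $(b-4)$-dimensional complex; that implication is precisely the nontrivial content of the theorem (equivalently, that the tree complex is Cohen--Macaulay, i.e.\ a wedge of $(b-4)$-spheres, as proved by Vogtmann and Robinson--Whitehouse, or by Hatcher--Wahl's sphere/disk surgery argument). Your plan---fix one vertex $S$, homotope $f(S^k)$ through $\Lk(S)$ and cone into $\St(S)$---is not an argument as stated: the simplices of $f(S^k)$ that obstruct this are those whose spheres intersect $S$, and to remove them you need a surgery procedure putting spheres in normal position with respect to $S$ that stays inside the complex and strictly decreases intersections, together with connectivity estimates for the links of simplices of all dimensions (not just vertices), organized so that newly created bad simplices are simpler. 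None of this is set up or verified, so what you have is a correct combinatorial model and a correct local (vertex-link) computation, with the globalization---exactly what the cited Hatcher--Wahl proof supplies---left unexecuted.
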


In this paper, we denote the simplicial join by $\star.$

\begin{proposition} \label{join}
Let $A_1, \ldots, A_q$ be simplicial complexes. If each $A_i$ is $n_i$-connected, then $A_1 \star ... \star A_q$ is $(n_1 + . . . +n_q + 2q - 2)$-connected. 
\end{proposition}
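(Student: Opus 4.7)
The plan is to induct on $q$. The case $q=1$ is trivial since the formula $n_1 + 2\cdot 1 - 2 = n_1$ reduces the claim to the hypothesis that $A_1$ is $n_1$-connected. All of the content lies in the base case $q=2$, which I would establish via the classical homotopy equivalence
\[
A_1 \star A_2 \;\simeq\; \Sigma(A_1 \wedge A_2),
\]
valid for any pair of non-empty pointed CW complexes. The smash product of an $m$-connected and an $n$-connected CW complex is $(m+n+1)$-connected --- using minimal CW structures, the smallest-dimensional non-basepoint cell of $A_1 \wedge A_2$ has dimension at least $(m+1)+(n+1)$ --- and suspension raises connectivity by one, so $A_1 \star A_2$ is $(n_1+n_2+2)$-connected, matching the desired formula in the $q=2$ case.

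For the inductive step I would write
\[
A_1 \star \cdots \star A_q \;=\; (A_1 \star \cdots \star A_{q-1}) \star A_q,
\]
note that by induction the first factor is $\bigl(n_1 + \cdots + n_{q-1} + 2(q-1) - 2\bigr)$-connected, and apply the base case to the pair of factors. The resulting connectivity is
\[
\bigl(n_1 + \cdots + n_{q-1} + 2(q-1) - 2\bigr) + n_q + 2 \;=\; n_1 + \cdots + n_q + 2q - 2,
\]
exactly as required.

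The main point to be careful about is the base case, and in particular the standing assumption that each $A_i$ is non-empty so that basepoints exist and the suspension-smash identification is available. This is automatic from $n_i \geq -1$, and if any $n_i \leq -2$ the statement is vacuous. A more hands-on alternative, avoiding the suspension-smash formula, would be to apply van Kampen together with relative Hurewicz to the decomposition $A_1 \star A_2 = (CA_1 \times A_2) \cup_{A_1 \times A_2} (A_1 \times CA_2)$, where each piece deformation retracts onto a highly connected factor and the intersection $A_1 \times A_2$ has controlled connectivity via the Künneth/product-of-CW-complexes argument; but invoking $A \star B \simeq \Sigma(A \wedge B)$ is by far the cleanest route.
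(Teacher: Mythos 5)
The paper itself gives no proof of this proposition: it is listed under ``Known connectivity results'' and used as a standard fact (it is essentially Milnor's connectivity formula for joins), so there is no argument of the authors' to compare against. Your proof is the standard one and is correct: the identification $A_1 \star A_2 \simeq \Sigma(A_1 \wedge A_2)$ for non-empty CW complexes, the $(m+n+1)$-connectivity of the smash of an $m$-connected and an $n$-connected complex, and the induction on $q$ with the arithmetic $\bigl(n_1+\cdots+n_{q-1}+2(q-1)-2\bigr)+n_q+2 = n_1+\cdots+n_q+2q-2$ all check out. The one point I would tighten is your remark that ``if any $n_i \leq -2$ the statement is vacuous.'' It is not vacuous: the hypothesis on that factor is empty, but the conclusion still asserts a definite connectivity for the join, and this case genuinely occurs in the paper's application (in Lemma \ref{pillarlinkconnect} the factors $S(N^c_i)$ are $(b_i-5)$-connected with $b_i \geq 3$, and $S(M_{0,3})$ is the empty complex with assigned connectivity $-2$). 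The fix is immediate, so this is a gap in exposition rather than in substance: if $A_i$ is empty, then $A_i$ is the identity for the join, and since $n_i+2 \leq 0$ the claimed connectivity is no stronger than what induction gives for the remaining factors; if $A_i$ is non-empty, it is at least $(-1)$-connected and your main argument applies, yielding a conclusion at least as strong as claimed. With that sentence added, your write-up is a complete proof of the proposition as the paper needs it.
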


\section{Properties of degree}

In this section, we establish basic properties of degree. We begin by introducing the notion of a \emph{pillar} of a sphere system.

\begin{definition}
The pillar of a sphere system $\sigma$ in $S(M_{n,b})$ is the face (or empty simplex) consisting of vertices corresponding to edges in $\Gamma(\sigma)$ that go between the basepoint vertex and a non-basepoint vertex. We say that a sphere system is a pillar if it equals its pillar.

\end{definition}

Non-separating sphere systems have empty pillars while separating sphere systems have non-empty pillars. The pillar of the simplex depicted in the top of Figure \ref{CorePic} is depicted in the bottom of Figure \ref{CorePic}.

\begin{figure}[!ht] 

\begin{center}\scalebox{.5}{\includegraphics{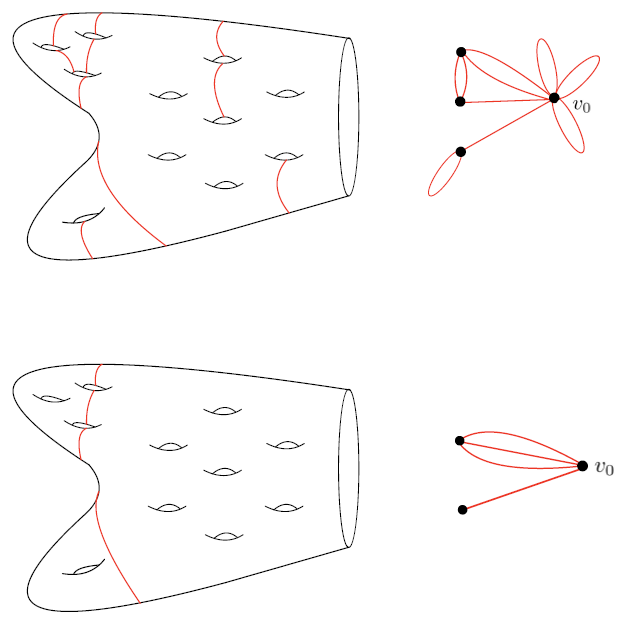}}

\end{center}

\caption{Pillar of a sphere system and associated dual graphs} \label{CorePic} 
\end{figure}

\begin{lemma} \label{removeVertex}
Let $\tau$ be a simplex of $S(M_{n,1})$ and let $\sigma$ be the face of $\tau$ corresponding to removing a vertex $S$. If $S$ is part of the pillar of $\tau$, then $\deg(\sigma) < \deg(\tau).$  Otherwise, $\deg(\sigma) = \deg(\tau).$ 
\end{lemma}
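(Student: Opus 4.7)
The plan is to analyze how collapsing the edge corresponding to $S$ transforms the dual graph $\Gamma(\tau)$ into $\Gamma(\sigma)$, and to compare the degree contributions vertex-by-vertex. I would split into cases based on whether $S$ is a self-loop or connects two distinct vertices, noting first that self-loops are never in the pillar (pillar edges by definition join the basepoint to a non-basepoint vertex, hence have distinct endpoints).

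If $S$ is a self-loop at a vertex $v$, collapsing it increases $\genus(v)$ by $1$ and decreases $\valence(v)$ by $2$, so the quantity $\valence(v) + 2\,\genus(v)$ is unchanged. Thus $v$'s contribution to $\deg$ is preserved whether or not $v$ is the basepoint, giving $\deg(\sigma)=\deg(\tau)$. If $S$ connects distinct vertices $v,w$ and is merged into a single vertex $u$ with $\genus(u)=\genus(v)+\genus(w)$ and $\valence(u)=\valence(v)+\valence(w)-2$, then when neither $v$ nor $w$ is the basepoint a direct cancellation shows $u$'s contribution equals the sum of those of $v$ and $w$, so $\deg(\sigma)=\deg(\tau)$; this is the non-pillar case. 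When exactly one endpoint, say $v$, is the basepoint $v_0$, the merged vertex $u$ becomes the new $v_0$ of $\Gamma(\sigma)$; the contributions from all other vertices are preserved, and $w$'s contribution to $\deg(\tau)$ is simply dropped, yielding
\[ \deg(\tau) - \deg(\sigma) \;=\; \valence_\tau(w) + 2\,\genus(w) - 2. \]

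The heart of the proof is showing that this last quantity is strictly positive whenever $w$ is a non-basepoint vertex in $\Gamma(\tau)$, and this is where I would invoke the hypothesis $b=1$. The component of $M_{n,1}\setminus\tau$ corresponding to $w$ has boundary consisting only of the $\valence_\tau(w)$ spheres of $\tau$ adjacent to $w$, with no contribution from $\partial M_{n,1}$. If $\genus(w)=0$, this component is a compact simply connected $3$-manifold with only sphere boundaries. Valence $1$ would force it to be a $3$-ball, contradicting that vertices of $S(M_{n,1})$ do not bound balls; valence $2$ would force it to be $S^{2}\times I$, making the two adjacent spheres isotopic in $M_{n,1}$ and hence equal as vertices of the simplex, another contradiction. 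Hence $\valence_\tau(w)+2\,\genus(w)\geq 3$ in all cases, giving $\deg(\tau)-\deg(\sigma)\geq 1>0$. I expect this last geometric step, ruling out low-valence genus-$0$ components, to be the main obstacle, and it is the only place where the restriction $b=1$ is genuinely used: with extra boundary spheres of $M_{n,b}$ absorbed into $w$, the same argument would fail.
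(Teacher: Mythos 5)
Your proposal is correct and follows essentially the same argument as the paper: compare degree contributions under the edge collapse, reduce the pillar case to showing $\valence(w)+2\genus(w)-2>0$ for the non-basepoint endpoint, and use $b=1$ to identify that component with some $M_{x,y}$ whose boundary spheres are exactly the adjacent spheres, ruling out $(x,y)=(0,1)$ and $(0,2)$ via the no-ball and no-isotopic-spheres conditions. The only difference is cosmetic case organization (self-loop versus non-self-loop first, rather than pillar versus non-pillar).
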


\begin{proof}
We first consider the case where $S$ is part of the pillar. Then $S$ corresponds to an edge between $v_0$ and some other vertex $v$. In this case, $\Gamma(\sigma)$ is obtained from $\Gamma(\tau)$ by collapsing an edge between $v_0$ and $v$. Thus, $$\deg(\tau) - \deg(\sigma)=\valence(v) +2 \cdot \genus(v)-2$$ since the terms coming from non-basepoint vertices other than $v$ appear twice and cancel. 

Pick $x$ and $y$ such that the connected component of $M_{n,1} \backslash \tau$ associated to $v$ is isomorphic to $M_{x,y}$. Then $y=\valence(v)$ since $v$ is not the basepoint of $\Gamma(\tau)$ and $S(M_{n,1})$ has no non-basepoint boundary components. We also have that $x=\genus(v)$. Different spheres in sphere systems are not allowed to be isotopic so $(x,y) \neq (0,2)$. Spheres do not bound balls so $(x,y) \neq (0,1)$. Thus either $\valence(v) \geq 3$ or $\valence(v) \geq 1$ and $\genus(v) \geq 1$. Hence $\valence(v) +2 \cdot \genus(v)-2>0$ and so $\deg(\sigma) < \deg(\tau)$ if $S$ is part of the pillar.

Now assume that $S$ is not part of the pillar. There are $3$ cases to consider.

\begin{enumerate}

\item $S$ corresponds to a self-loop at $v_0$:

In this case, $\Gamma(\sigma)$ and $\Gamma(\tau)$ agree away from the basepoint. The sum defining degree is only over non-basepoint vertices, and hence $\deg(\sigma)=\deg(\tau)$. 

\item $S$ corresponds to a self-loop at $v$ and $v \neq v_0$:

Collapsing the edge corresponding to $S$ in $\Gamma(\tau)$ decreases the valence of $v$ by $2$ but increases the genus by $1$, leaving degree unchanged.

\item $S$ corresponds to an edge between $v_1$ and $v_2$ and $v_0,v_1,v_2$ are all distinct:

Let $v$ denote the vertex of $\Gamma(\sigma)$ corresponding to the collapsed edge associated to $S$. We have that $$\deg(\tau) - \deg(\sigma)=$$ $$\Big ( \valence(v) +2 \cdot \genus(v)-2 \Big )- \Big ( \valence(v_1) +2 \cdot \genus(v_1)-2 \Big ) -\Big ( \valence(v_2) +2 \cdot \genus(v_2)-2 \Big ) $$ since the graphs only differ at $v$, $v_1$, and $v_2$ and hence the other terms cancel in the sums. This quantity vanishes since $$\valence(v)=\valence(v_1)+\valence(v_2)-2$$ $$\text{and }\genus(v)=\genus(v_1)+\genus(v_2).$$ Thus, $\deg(\sigma)=\deg(\tau)$.
\end{enumerate} \end{proof}

Lemma \ref{removeVertex} gives the following immediately.

\begin{corollary} \label{complex}
If $\sigma$ is a face of $\tau$ in $S(M_{n,1})$, then $\deg(\sigma) \leq \deg(\tau)$. In particular, $S^{\leq d}(M_{n,1})$ is a well-defined simplicial complex.

\end{corollary}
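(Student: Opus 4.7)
The plan is to derive the inequality $\deg(\sigma)\leq\deg(\tau)$ by a short induction on the codimension of $\sigma$ in $\tau$, using Lemma~\ref{removeVertex} as the single-step input. The simplicial complex claim will then be a formal consequence.

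First I would set up the induction. If $\sigma=\tau$, there is nothing to prove. Otherwise, choose a vertex $S$ of $\tau$ that is not in $\sigma$, let $\tau'$ be the codimension-one face of $\tau$ obtained by deleting $S$, and observe that $\sigma$ is a face of $\tau'$ of strictly smaller codimension in $\tau'$ than in $\tau$. By Lemma~\ref{removeVertex}, we have $\deg(\tau')\leq\deg(\tau)$, with strict inequality when $S$ is part of the pillar of $\tau$. Applying the inductive hypothesis to $\sigma\subseteq\tau'$ yields $\deg(\sigma)\leq\deg(\tau')\leq\deg(\tau)$.

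For the last sentence of the statement, I need to verify that $S^{\leq d}(M_{n,1})$ is closed under taking faces, which is the defining property of a subcomplex. If $\tau$ is a simplex with $\deg(\tau)\leq d$ and $\sigma$ is a face of $\tau$, then by the monotonicity just proved we have $\deg(\sigma)\leq\deg(\tau)\leq d$, so $\sigma$ also lies in $S^{\leq d}(M_{n,1})$. Hence $S^{\leq d}(M_{n,1})$ is a genuine simplicial subcomplex of $S(M_{n,1})$, not merely a collection of simplices.

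There is no real obstacle here: Lemma~\ref{removeVertex} does all the nontrivial work by handling the single-edge-collapse cases (pillar edge, self-loop at $v_0$, self-loop at a non-basepoint vertex, and non-loop edge between two non-basepoint vertices). The corollary is just the iterated form of that lemma, and the well-definedness as a simplicial complex is immediate from monotonicity of degree under taking faces.
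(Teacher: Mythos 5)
Your proof is correct and is essentially the paper's argument: the paper deduces the corollary immediately from Lemma~\ref{removeVertex} by removing vertices one at a time, which is exactly your induction on codimension. Your write-up just makes the iteration and the face-closure check explicit.
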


We now conclude that the degree of a sphere system only depends on its pillar. 

\begin{corollary} \label{degEqpillar}
Let $c$ be the pillar of a sphere system $\tau$. Then $\deg(c)=\deg(\tau)$.
\end{corollary}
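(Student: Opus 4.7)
The plan is to deduce the corollary directly from \autoref{removeVertex} by sequentially removing from $\tau$ the vertices that are not in the pillar. Order the vertices of $\tau$ as $S_0, \ldots, S_p, T_0, \ldots, T_q$ where $S_0,\ldots, S_p$ are the vertices comprising the pillar $c$ and $T_0,\ldots, T_q$ are the remaining vertices. Define $\tau_0 = \tau$ and let $\tau_{i+1}$ be the face of $\tau_i$ obtained by deleting $T_i$. The claim is that each $T_i$ is not part of the pillar of $\tau_i$, so that \autoref{removeVertex} gives $\deg(\tau_{i+1}) = \deg(\tau_i)$, and hence $\deg(c) = \deg(\tau_{q+1}) = \deg(\tau_0) = \deg(\tau)$.

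The only nonroutine point is verifying that the $S_j$'s remain in the pillar as we collapse each $T_i$; equivalently, that collapsing a non-pillar edge in the dual graph sends pillar edges to pillar edges. I would argue this by inspecting the three cases in the proof of \autoref{removeVertex}. If $T_i$ is a self-loop at $v_0$, then no non-basepoint vertex of $\Gamma(\tau_i)$ changes and the pillar edges are unaffected. If $T_i$ is a self-loop at some $v \neq v_0$, then only the genus and valence at $v$ change, so edges between $v_0$ and non-basepoint vertices remain edges between $v_0$ and non-basepoint vertices. If $T_i$ is an edge between two non-basepoint vertices $v_1, v_2$, then $v_1$ and $v_2$ are merged into a single non-basepoint vertex $v$, and any edge from $v_0$ to $v_1$ or $v_2$ becomes an edge from $v_0$ to $v$, so again the pillar is preserved as a set of spheres.

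With this observation in hand, the inductive application of \autoref{removeVertex} is immediate. The main obstacle, such as it is, is this bookkeeping step confirming that the notion of pillar behaves well under the collapses corresponding to non-pillar spheres; once that is established, the corollary is an immediate consequence of the equality clause in \autoref{removeVertex}.
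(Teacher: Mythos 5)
Your proof is correct and takes essentially the same route as the paper, which simply applies Lemma \ref{removeVertex} to each vertex of $\tau$ not in $c$; you additionally spell out the bookkeeping (left implicit in the paper) that collapsing a non-pillar edge never merges a vertex into $v_0$, so the pillar — and hence the non-pillar status of the remaining $T_j$'s — is preserved at each step.
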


\begin{proof} This follows from applying Lemma \ref{removeVertex} to every vertex of $\tau$ that is not a vertex of $c$.
\end{proof}

\section{New connectivity results}

In this section, we prove $S^{\leq d}(M_{n,1})$ is $(n+d-2)$-connected (Theorem \ref{mainthm1}).

\begin{convention}
Given a simplex $\sigma$ of $S(M_{n,b})$, we denote the basepoint connected component of $M_{n,b} \backslash \sigma$ by $O^\sigma$ and denote the other connected components by $N_1^\sigma, \ldots, N_k^\sigma$.
\end{convention}

If $c$ is a pillar, all spheres have $O^c$ on one side and some $N_i^c$ on the other side.

There is a natural inclusion $$S(O^\sigma) \star S(N_1^\sigma) \ldots \star S(N_k^\sigma) \hookrightarrow S(M_{n,b}).$$ We will often conflate elements of the domain of this inclusion with their image.

\begin{lemma} \label{badnessmaps}
Let $X$ be a simplicial complex, $f:X \to S(M_{n,1})$ a simplicial map, $c$ a pillar in the image of $f$, and $\tau \subset X$ some simplex of maximal dimension subject to the condition that $f(\tau)=c$. Assume for all $\omega$ in $X$ that $\deg(f(\omega)) \leq \deg(c)$. Then $$f(\emph{Link}_X(\tau)) \subset Y(O^c) \star S(N^c_1) \star ... \star S(N^c_k).$$
\end{lemma}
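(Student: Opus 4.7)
The plan is to unpack the statement combinatorially using the dual-graph description of sphere systems. A simplex $\omega$ of $\Link_X(\tau)$ is characterized by $\omega \cap \tau = \emptyset$ and $\omega \cup \tau$ being a simplex of $X$; correspondingly $f(\omega \cup \tau) = c \cup f(\omega)$ is a simplex of $S(M_{n,1})$. The maximality of $\tau$ forbids any vertex $v$ of $\omega$ from satisfying $f(v) \in c$, since then $\tau \cup \{v\}$ would still map onto $c$ and contradict the maximality assumption. Thus each sphere $f(v)$ is disjoint from $c$ yet not isotopic to any sphere of $c$, so it lies in a single component of $M_{n,1} \setminus c$, namely one of $O^c, N_1^c, \ldots, N_k^c$. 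This alone shows $f(\omega) \in S(O^c) \star S(N_1^c) \star \cdots \star S(N_k^c)$; the real content of the lemma is to upgrade the $S(O^c)$-factor to $Y(O^c)$.

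To that end, let $\omega_O \subseteq \omega$ be the subsimplex of those $v$ with $f(v) \in S(O^c)$, and aim to show that $O^c \setminus f(\omega_O)$ is connected. Since $\omega_O \cup \tau$ is a face of $\omega \cup \tau$, it is a simplex of $X$, so the hypothesis yields $\deg(c \cup f(\omega_O)) \leq \deg(c)$. I would then compute $\deg(c \cup f(\omega_O)) - \deg(c)$ directly from the dual-graph definition. Because $f(\omega_O)$ lies entirely inside $O^c$, no $N_i^c$ is subdivided, and every sphere of $c$ still has the same $N_i^c$ on one side; hence the vertices $v_1, \ldots, v_k$ contribute identical $\valence$ and $\genus$ terms in $\Gamma(c \cup f(\omega_O))$ as in $\Gamma(c)$. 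The only new non-basepoint contributions come from those components $O_1, \ldots, O_m$ of $O^c \setminus f(\omega_O)$ which do not contain the basepoint.

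Each such $O_j$ is diffeomorphic to some $M_{x_j, y_j}$ with $x_j = \genus(O_j)$ and $y_j = \valence(O_j)$, and the ball/isotopy restrictions on sphere systems exclude $(x_j, y_j) \in \{(0,1),(0,2)\}$ exactly as in the proof of Lemma \ref{removeVertex}. Hence each summand $\valence(O_j) + 2\genus(O_j) - 2$ is strictly positive, so
\[
\deg(c \cup f(\omega_O)) - \deg(c) = \sum_{j=1}^m \bigl(\valence(O_j) + 2\genus(O_j) - 2\bigr) > 0
\]
whenever $m \geq 1$, contradicting the degree bound. Therefore $m = 0$, so $f(\omega_O) \in Y(O^c)$, and $f(\omega) \in Y(O^c) \star S(N_1^c) \star \cdots \star S(N_k^c)$, as required. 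The main obstacle I anticipate is just the bookkeeping verifying that the $v_i$-contributions are truly unchanged—ensuring that cutting along spheres in $O^c$ does not alter the boundary spheres or genera of any $N_i^c$—but once that is checked the positivity argument of Lemma \ref{removeVertex} does all the heavy lifting.
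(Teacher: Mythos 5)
Your proof is correct, and it runs on the same engine as the paper's: disconnecting the basepoint piece $O^c$ creates new non-basepoint components, each of which contributes a strictly positive term $\valence + 2\cdot\genus - 2$ to the degree, contradicting the hypothesis $\deg(f(\omega)) \leq \deg(c)$. The organization differs, though, in a way worth noting. The paper argues by contradiction with a bad simplex $\theta \subset \Link_X(\tau)$ of minimal dimension, which forces $f(\partial\theta) \subset Y(O^c)$; it then peels off a single vertex and quotes Lemma \ref{removeVertex} together with Corollary \ref{degEqpillar} (the latter to identify $\deg(f(\theta')\star c)$ with $\deg(c)$). You instead apply the degree hypothesis directly to $\omega_O \cup \tau$ and compute the difference $\deg(c \star f(\omega_O)) - \deg(c)$ in one shot as a sum over all components of $O^c \setminus f(\omega_O)$ missing the basepoint, re-deriving the positivity inline rather than citing Lemma \ref{removeVertex} iteratively; this bypasses both the minimality device and Corollary \ref{degEqpillar}, at the cost of redoing a multi-sphere version of the bookkeeping in Lemma \ref{removeVertex} (your check that the $N_i^c$-vertices' valence and genus are untouched is exactly the point that needs, and gets, verification). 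You also make explicit something the paper leaves implicit: the maximality of $\tau$ is what guarantees no vertex of $\Link_X(\tau)$ maps to a sphere of $c$, so that every such sphere genuinely lives in one of $O^c, N_1^c, \ldots, N_k^c$ and gives a vertex of the corresponding factor of the join. Both routes prove the lemma; yours is slightly more self-contained, the paper's is shorter given the lemmas already established.
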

\begin{proof}
Let $c$ and $\tau$ be as above. By the disjointness condition defining $S(M_{n,b})$, it follows that $$f(\Link_X(\tau)) \subset S(O^c) \star S(N^c_1) \star ... \star S(N^c_k).$$ Suppose for the purposes of contradiction that there is a simplex $\theta$ in $\Link_X(\tau)$ such that $f(\theta)$ is not a simplex of $$ Y(O^c) \star S(N^c_1) \star ... \star S(N^c_k).$$ Assume $\theta$ has minimal dimension among such simplices. Necessarily, $f(\theta)$ is a simplex of $S(O^c)$ since otherwise it would not be minimal dimensional if some of its vertices mapped to other connected components. We must have that $f(\partial \theta) \subset Y(O^c)$, or otherwise we could replace $\theta$ by one of its faces and find a smaller dimensional simplex with the desired properties. Let $x$ be a vertex of $\theta$ and let $\theta'$ be the face opposite from $x$. Since $f(x)$ corresponds to an edge between the basepoint and another vertex in $\Gamma(f(\theta) \star c)$, Lemma \ref{removeVertex} implies $$\deg(f(\theta') \star c ) < \deg(f(\theta) \star c ).$$ Since $f(\theta') \subset Y(O^c)$, $f(\theta') \star c$ has $c$ as its pillar. Corollary \ref{degEqpillar} implies $$\deg(f(\theta') \star c))=\deg(c).$$ Thus, $\deg( f(\theta) \star c) > \deg(c)$ which contradicts our assumption that $\deg(f(\omega)) \leq \deg(c)$ for all simplices $\omega$ of $X$.\end{proof}

Similarly, for any pillar $c$, every simplex in $Y(O^c) \star S(N^c_1) \star ... \star S(N^c_k) \star \partial c$ must have lower degree than $c$.

\begin{lemma} \label{lowdeg}
If $c$ is a pillar, then $\deg(c) > \deg(\sigma)$ for all $$\sigma \subset Y(O^c) \star S(N^c_1) \star ... \star S(N^c_k) \star \partial c.$$
\end{lemma}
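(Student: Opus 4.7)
The plan is to combine Corollary \ref{degEqpillar} with an iterative application of Lemma \ref{removeVertex}, passing through the auxiliary simplex $\tau = \sigma \cup c$. Write $\sigma = \sigma_O \cup \sigma_N \cup \sigma_c$ for the decomposition of its vertex set induced by the join, so that $\sigma_O \subset Y(O^c)$, $\sigma_N$ is a simplex of $S(N^c_1) \star \cdots \star S(N^c_k)$, and $\sigma_c$ is a proper face of $c$. Since the spheres of $\sigma_O$ and $\sigma_N$ live in the complement of $c$, they are disjoint from every sphere of $c$, so $\tau$ is indeed a simplex of $S(M_{n,1})$.

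The first step is to verify $\deg(\tau) = \deg(c)$ by showing that the pillar of $\tau$ is exactly $c$. A sphere of $\sigma_O$ becomes a self-loop at the basepoint vertex of $\Gamma(\tau)$ since $\sigma_O$ is non-separating in $O^c$; a sphere of $\sigma_N$ does not meet $O^c$, so it corresponds to an edge or self-loop among non-basepoint vertices; and each sphere of $c$ still has $O^c$ on one side and some $N^c_j$ on the other, so it remains a pillar edge. Corollary \ref{degEqpillar} then yields the equality.

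Next I would pass from $\tau$ to $\sigma$ by removing the vertices of $c \setminus \sigma_c$ in a carefully chosen order. For each non-basepoint vertex $v_j$ of $\Gamma(c)$, let $P_j$ denote the set of pillar spheres of $c$ separating $O^c$ from $N^c_j$, and set $R_j = P_j \setminus \sigma_c$. I would process each nonempty $R_j$ as a block. When the first sphere $E$ of $R_j$ is removed, all of $P_j$ is still present in the current simplex, so $E$ is a pillar edge and Lemma \ref{removeVertex} gives a strict decrease in degree. After $E$ is removed, $N^c_j$ has merged with the basepoint component, so the other spheres of $P_j$ that remain in the simplex are self-loops at the basepoint; subsequent removals of vertices of $R_j$ are therefore non-pillar and leave the degree unchanged by Lemma \ref{removeVertex}. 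Since $\sigma_c \subsetneq c$, at least one $R_j$ is nonempty, so at least one strict drop occurs, and $\deg(\sigma) < \deg(\tau) = \deg(c)$.

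The main obstacle is the component-tracking in the third paragraph: one must verify that removing the first sphere of $R_j$ really does merge $N^c_j$ into the basepoint component without disturbing the pillar status of spheres in $P_{j'}$ for $j' \neq j$. This follows from the fact that removing a single sphere only affects the component it bounds with $O^c$, but justifying it carefully requires a short case check in the spirit of the proof of Lemma \ref{removeVertex}.
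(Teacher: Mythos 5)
Your proposal follows essentially the same route as the paper: the paper also forms the simplex $\sigma' = \sigma \cup c$ (your $\tau$), observes its pillar is exactly $c$ so that Corollary \ref{degEqpillar} gives $\deg(\sigma')=\deg(c)$, and then deletes the vertices of $c$ not in $\sigma$ using Lemma \ref{removeVertex} to get a strict drop. One remark on your third paragraph, which you flag as the main obstacle: the claim that after removing the first sphere $E \in R_j$ the remaining spheres of $P_j$ are self-loops at the basepoint is not true in general, because $\sigma_N$ may separate $N^c_j$ into several pieces; a sphere of $P_j$ adjacent to a piece of $N^c_j$ that has not yet merged with the basepoint component is still a pillar edge after $E$ is collapsed. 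Fortunately this claim is not needed: every single-vertex removal either strictly decreases the degree (pillar case) or leaves it unchanged (non-pillar case) by Lemma \ref{removeVertex}, i.e.\ degree is weakly monotone under passing to faces (Corollary \ref{complex}), so you only need one removal that is guaranteed to be a pillar removal. Since the pillar of $\tau$ is all of $c$, removing any single vertex of $c \setminus \sigma_c$ first already gives the strict drop, and the remaining removals can be done in any order; the block-by-block bookkeeping (and the component-tracking it requires) can be dispensed with entirely, which is exactly the shortcut the paper's proof takes.
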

\begin{proof}
Suppose $\sigma \subset Y(O^c) \star S(N^c_1) \star ... \star S(N^c_k) \star \partial c$ and let $c_{\sigma}$ denote the pillar of $\sigma$. Let $\sigma'$ be the simplex obtained from $\sigma$ by adding in all of the vertices of $c$. Since $\sigma'$ has pillar equal to $c$,  Corollary \ref{degEqpillar} implies that $\deg(\sigma')=\deg(c)$. Since $\sigma'$ is obtained from $\sigma$ by adding vertices corresponding to edges between the basepoint and another connected component in $\Gamma(\sigma')$, Lemma \ref{removeVertex} implies $\deg(\sigma') > \deg(\sigma)$.
\end{proof}

We also have a useful result about the connectivity of $Y(O^c) \star S(N^c_1) \star ... \star S(N^c_k)$.

\begin{lemma} \label{pillarlinkconnect}

Let $c$ be a simplex of $S(M_{n,1})$ which is a pillar. Then $Y(O^{c}) \star S(N^{c}_1) \star ... \star S(N^{c}_k)$ is $(n+\deg(c)-\dim(c)-3)$-connected.
\end{lemma}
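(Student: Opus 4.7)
The plan is to apply Proposition \ref{join} to the decomposition $Y(O^c) \star S(N^c_1) \star \cdots \star S(N^c_k)$, bounding the connectivity of each factor via Theorems \ref{Yconn}, \ref{Scontract}, and \ref{SM0}. First I would record the combinatorics of the pillar: let $g_i = \genus(v_i)$ and $b_i = \valence(v_i)$ for the non-basepoint vertex $v_i \in \Gamma(c)$ corresponding to $N^c_i$, so that $N^c_i \cong M_{g_i, b_i}$. Since $c$ is a pillar, every sphere of $c$ runs between $v_0$ and some non-basepoint vertex, giving $\sum_{i=1}^k b_i = \dim(c) + 1$ and $\rank(\pi_1(\Gamma(c))) = \dim(c) - k + 1$. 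Proposition \ref{rank} then yields $g_0 + \sum_{i=1}^k g_i = n - \dim(c) + k - 1$, where $g_0 = \genus(O^c)$.

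Next I would split on whether some $N^c_i$ has positive genus. If some $g_i \geq 1$ for $i \geq 1$, then $S(N^c_i)$ is contractible by Theorem \ref{Scontract}, so the whole join is contractible and the bound is immediate. Otherwise all $g_i = 0$ for $i \geq 1$; then $g_0 = n - \dim(c) + k - 1$ and
\[
\deg(c) \;=\; \sum_{i=1}^k (b_i + 2g_i - 2) \;=\; \dim(c) + 1 - 2k.
\]
In this case Theorem \ref{Yconn} shows $Y(O^c)$ is $(g_0 - 2)$-connected and Theorem \ref{SM0} shows each $S(N^c_i) = S(M_{0,b_i})$ is $(b_i - 5)$-connected. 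Applying Proposition \ref{join} to the $k+1$ join factors gives a connectivity lower bound of
\[
(g_0 - 2) + \sum_{i=1}^k (b_i - 5) + 2(k+1) - 2 \;=\; g_0 + \dim(c) - 3k - 1 \;=\; n - 2k - 2,
\]
which equals $n + \deg(c) - \dim(c) - 3$ exactly.

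The main obstacle is verifying that the numerology aligns so that the $k$-dependence cancels between the join estimate and the degree formula; the calculation above shows this happens only because the pillar structure forces $\sum b_i = \dim(c)+1$ and $\rank(\pi_1(\Gamma(c))) = \dim(c) - k + 1$ simultaneously. Minor corner cases, such as empty factors arising when some $b_i \in \{3,4\}$, are absorbed by the convention that the empty complex is $(-2)$-connected and $X \star \emptyset = X$, which is consistent with the formula in Proposition \ref{join}.
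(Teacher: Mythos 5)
Your proposal is correct and follows essentially the same route as the paper: the same case split on whether some $N^c_i$ has positive genus (contractibility of the join via Theorem \ref{Scontract}), and in the genus-zero case the same combination of Theorems \ref{Yconn} and \ref{SM0} with Proposition \ref{join}, together with $\sum b_i = \dim(c)+1$, $\deg(c)=\dim(c)+1-2k$, and Proposition \ref{rank} to match the stated bound.
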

\begin{proof}
If at least one $N^c_i$ has nonzero genus, then Theorem \ref{Scontract} implies $Y(O^{c}) \star S(N^{c}_1) \star ... \star S(N^{c}_k)$ is contractible because the simplicial join of a contractible complex with any other complex is contractible. Thus, it remains to address the case that each $N^c_i$ has genus zero. Let $b_i$ denote the number of boundary components of $N^c_i$ and let $g$ denote the genus of $O^c$. By Theorem \ref{Yconn}, Theorem \ref{SM0}, and Proposition \ref{join}, $$Y(O^{c}) \star S(N^{c}_1) \star ... \star S(N^{c}_k)$$ is $\Big((g-2)+(b_1 -5)+...+(b_k -5) + 2(k+1) -2\Big)$-connected. Because $\sum_{i=1}^k b_i = \dim(c)+1,$ after simplification, we see that the join is $\Big(g+\dim(c)+1-3k-2 \Big)$-connected. 

Since each $N_i^c$ has genus $0$, we have that $$\deg(c)= \sum_{v \neq v_0} (\valence(v) -2) = \dim(c)+1-2k.$$ By Proposition \ref{rank}, $g=n - \rank(\pi_1(\Gamma(c)))$. Using that the Euler characteristic can be computed using ranks of homology or number of cells, we see that the rank of the fundamental group of a connected graph is $1$ more than the number of edges minus the number of vertices. Thus, $$\rank(\pi_1(\Gamma(c))) = \dim(c)+1-k.$$ Substituting all of this in yields the result.
\end{proof}


\begin{definition}
Any simplicial complex structure on a $0$-dimensional manifold is called combinatorial. A simplicial complex structure on a $d$-manifold is called combinatorial if links of $p$-simplices are combinatorial $(d-p-1)$-spheres.
\end{definition}

Combinatorial simplicial complex structures on manifolds with boundary are defined analogously. We are now ready to prove the degree theorem for $S(M_{n,1})$.

\begin{theorem} \label{mainthm1}
$S^{\leq d}(M_{n,1})$ is $(n+d-2)$-connected.
\end{theorem}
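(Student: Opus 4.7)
The plan is to extend any $k$-sphere (with $k \leq n+d-2$) into the contractible ambient $S(M_{n,1})$ (Theorem \ref{Scontract}) and then run a standard bad-simplex pushing argument, with Lemma \ref{badnessmaps}, Lemma \ref{lowdeg}, and Lemma \ref{pillarlinkconnect} supplying the three key ingredients.

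Concretely, fix a simplicial map $f \colon S^k \to S^{\leq d}(M_{n,1})$ with $k \leq n+d-2$, and use Theorem \ref{Scontract} to extend $f$ to a simplicial map $F \colon D^{k+1} \to S(M_{n,1})$ on some combinatorial PL triangulation of the $(k+1)$-disk. Let $D$ denote the maximum of $\deg(F(\omega))$ over simplices $\omega \subset D^{k+1}$; if $D \leq d$ we are done, so assume $D > d$. By Corollary \ref{degEqpillar}, the pillar of any degree-$D$ simplex itself has degree $D$, so we may pick a pillar $c$ in the image with $\deg(c) = D$ and a simplex $\tau \subset D^{k+1}$ of maximal dimension satisfying $F(\tau) = c$. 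Such a $\tau$ lies in the interior of $D^{k+1}$, since $f(\partial D^{k+1}) \subset S^{\leq d}$ while $D > d$, so $\Link(\tau)$ is a combinatorial $(k - \dim\tau)$-sphere.

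The core computation is that $F|_{\Link(\tau)}$ null-homotopes in a complex controlled by $c$. By Lemma \ref{badnessmaps}, $F(\Link(\tau)) \subset Y(O^c) \star S(N^c_1) \star \cdots \star S(N^c_k)$, and by Lemma \ref{pillarlinkconnect} this join is $(n+D-\dim(c)-3)$-connected. Using $\dim\tau \geq \dim c$ and $D \geq d+1$, one has
\[
k - \dim\tau \;\leq\; k - \dim c \;\leq\; (n+d-2) - \dim c \;\leq\; (n+D-3) - \dim c,
\]
so $F|_{\Link(\tau)}$ bounds a simplicial disk $G \colon \Delta \to Y(O^c) \star S(N^c_1) \star \cdots \star S(N^c_k)$ with $\partial \Delta = \Link(\tau)$. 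Excise $\Star(\tau) = \tau \star \Link(\tau)$ from $D^{k+1}$ and replace it by the combinatorial $(k+1)$-ball $\partial\tau \star \Delta$, which shares the boundary $\partial\tau \star \Link(\tau)$, and extend $F$ across the patch by $F|_{\partial\tau} \star G$. The patch's image lies in $\partial c \star Y(O^c) \star S(N^c_1) \star \cdots \star S(N^c_k)$, whose simplices all have degree strictly less than $D$ by Lemma \ref{lowdeg}.

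Finally, iterate. Each replacement removes $\tau$ (and potentially further top-dimensional simplices whose images are degree-$D$ pillars) while introducing no simplex with image of degree $\geq D$, so after exhausting the finitely many max-dimensional simplices mapping to degree-$D$ pillars, the top degree strictly drops, and downward induction on $D$ terminates with an extension of $f$ having image in $S^{\leq d}(M_{n,1})$. The main technical obstacle to be careful with is staging this iteration cleanly in the PL category: one must keep the triangulation combinatorial after each patch (so that links remain spheres), invoke simplicial approximation to realize the extension $G$, and organize the reduction with a lexicographic badness measure such as $(D,\, \#\{\text{max-dim simplices with image a degree-}D\text{ pillar}\})$ so that strict decrease at each step, and hence termination, are manifest.
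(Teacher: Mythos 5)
Your overall strategy is the paper's: extend into the contractible $S(M_{n,1})$, locate a maximal-degree pillar $c$ in the image, use Lemma \ref{badnessmaps} and Lemma \ref{pillarlinkconnect} to cone off the link of a preimage simplex $\tau$, and retriangulate $\mathrm{Star}(\tau)$ as $\partial\tau \star \Delta$. The dimension count $k-\dim\tau \leq n+D-\dim(c)-3$ is correct. However, there is a genuine gap in the degree-decrease step. You claim the image of the new patch lies in $\partial c \star Y(O^c)\star S(N^c_1)\star\cdots\star S(N^c_k)$ and hence, by Lemma \ref{lowdeg}, consists of simplices of degree strictly less than $D$. This is only valid when $F|_\tau$ is injective, i.e.\ when $\dim\tau=\dim c$. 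Since $F$ is merely simplicial, you may well have $\dim\tau>\dim c$, and then a proper face of $\tau$ maps \emph{onto} all of $c$, so $F(\partial\tau)\subset c$ but not $\subset\partial c$. The resulting new simplices have image of the form $c\star\sigma$ with $\sigma$ in $Y(O^c)\star S(N^c_1)\star\cdots\star S(N^c_k)$; the pillar of such a simplex is $c$ itself, so its degree is exactly $D$, contradicting your assertions that the patch introduces ``no simplex with image of degree $\geq D$'' and that the top degree strictly drops after finitely many patches. Lemma \ref{lowdeg} simply does not apply in this case, because its hypothesis is that $\sigma$ contains at most a proper face of $c$.

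This is precisely the case the paper's proof treats separately: it chooses $y$ to be the maximal dimension over \emph{all} simplices of $D^{i+1}$ mapping to \emph{any} pillar of degree $x$ (not just preimages of one fixed $c$), and argues that when $\dim c<\dim\omega=y$ the patch, while possibly still containing preimages of degree-$x$ pillars, only contains such preimages of dimension $<y$; termination then follows from the lexicographic decrease of $(x,y)$, i.e.\ either the degree drops or ``the map becomes more injective on pillars.'' Your closing remark about a lexicographic measure gestures in this direction, but the measure you propose (the number of maximal-dimensional simplices mapping to degree-$D$ pillars), combined with your choice of $\tau$ maximal only among preimages of the fixed pillar $c$, is not shown to decrease: a patch at a non-globally-maximal $\tau$ can create new simplices mapping onto $c$ without reducing your count. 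To repair the argument, choose $\tau$ of maximal dimension among preimages of all degree-$D$ pillars and verify that every new simplex mapping to a degree-$D$ pillar has strictly smaller dimension; with that bookkeeping in place the rest of your argument goes through and coincides with the paper's proof.
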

\begin{proof}
Fix $i \leq n+d-2$. By the simplicial approximation theorem, it is sufficient to show that any map $f:S^{i} \to S^{\leq d}(M_{n,1})$ which is simplicial with respect to some combinatorial simplicial structure on $S^i$ can be extended to a map $D^{i+1} \to S^{\leq d}(M_{n,1})$. Note that $S(M_{n,1})$ is contractible, so we can extend $f$ to $\hat{f}: D^{i+1} \to S(M_{n,1})$ for some combinatorical simplicial structure on $D^{i+1}$. The fact that we require the simplicial structure on $D^{i+1}$ to be combinatorical ensures that the links of interior simplices will be homeomorphic to spheres of the appropriate dimension.

Let $x$ be the maximum degree of a simplex in the image of $\hat f$ and let $y$ be the maximal dimension of a simplex of $D^{i+1}$ whose image is a pillar of degree $x$. Corollary \ref{degEqpillar} states that the degree of a simplex agrees with the degree of its pillar. If $x \leq d$, then we are done as this would mean that $\hat f$ factors through $S^{\leq d}(M_{n,1}),$ and hence that $f:S^{i} \to S^{\leq d}(M_{n,1})$ is null homotopic. We may therefore assume $x>d$. Fix $\omega$ a simplex of $D^{i+1}$ of dimension $y$ with $\hat f(\omega)=c$ and $\deg(c)=x$. We will modify the simplicial structure on $D^{i+1}$ in the interior of $\Star_{D^{i+1}}(c)$ so that we remove the simplex $\omega$ and all new simplices $\upsilon$ that we introduce have the property that if $\hat f(\upsilon)$ is a pillar, then either $\deg(\hat f(\upsilon)) < x$ or $\deg(\hat f(\upsilon)) = x$ and $\dim \upsilon <y$. By Lemma \ref{badnessmaps}, $\Link_{D^{i+1}}(\omega)$ maps to $$Y(O^c) \star S(N^c_1) \star ... \star S(N^c_k).$$ By Lemma \ref{pillarlinkconnect}, this simplicial join is $(n+x-\dim(c)-3)$-connected and hence $(n+x-y-3)$-connected. Since the simplicial structure is combinatorial, $\Link_{D^{i+1}}(\omega) \cong S^{i-y}$. Since $i \leq n+d-2$ and $d<x$, we have that $i-y \leq n+x-y-3$. Thus, we can extend $\hat{f}|_{\Link_{D^{i+1}}(\omega)} $ to a map $$g: \Cone\Big(\Link_{D^{i+1}}(\omega)\Big) \m Y(O^c) \star S(N^c_1) \star ... \star S(N^c_k).$$ Modify the simplicial structure on $D^{i+1}$ by replacing $\Star_{D^{i+1}}(\omega)$ with $\Cone(\Link_{D^{i+1}}(\omega)) \star \partial \omega$ and change the definition of $\hat f$ so that it agrees with $g$ on the new vertices. If $\dim c=\dim \omega=y$, then Lemma \ref{lowdeg} implies that simplices in $\Cone(\Link_{D^{i+1}}(\omega)) \star \partial w$ will map to simplices of degree $<x$. If $\dim c < \dim \omega=y$, then all simplices in $\Cone(\Link_{D^{i+1}}(\omega))$ that map to pillars will have dimensions $<y$. 

In other words, we have lowered the degree or made the map more injective on pillars. Since dimensions cannot be negative, iterating this procedure will modify the map $\hat f$ so that its image has no pillars of degree $\geq x$. We continue this process until the image of $\hat f$ lies in $S^{\leq d}(M_{n,1})$.
\end{proof}

We view Theorem \ref{mainthm1} as the rigorous version of Theorem \ref{MainAuterSpace}.

\bibliographystyle{amsalpha}
\bibliography{AutFn}

\def\cprime{$'$}
\providecommand{\bysame}{\leavevmode\hbox to3em{\hrulefill}\thinspace}
\providecommand{\MR}{\relax\ifhmode\unskip\space\fi MR }
\providecommand{\MRhref}[2]{%
  \href{http://www.ams.org/mathscinet-getitem?mr=#1}{#2}
}
\providecommand{\href}[2]{#2}
\begin{thebibliography}{CHKV16}

\bibitem[Br{\"u}20]{Bthesis}
Benjamin Br{\"u}ck, \emph{Between buildings and free factor complexes}, PhD
  Thesis, Universität Bielefeld (2020),
  \url{https://pub.uni-bielefeld.de/record/2940020}.

\bibitem[CHKV16]{Assembling}
James Conant, Allen Hatcher, Martin Kassabov, and Karen Vogtmann,
  \emph{Assembling homology classes in automorphism groups of free groups},
  Comment. Math. Helv. \textbf{91} (2016), no.~4, 751--806. \MR{3566523}

\bibitem[CV86]{CulV}
Marc Culler and Karen Vogtmann, \emph{Moduli of graphs and automorphisms of
  free groups}, Invent. Math. \textbf{84} (1986), no.~1, 91--119. \MR{830040}

\bibitem[Hat95]{HatcherStab}
Allen Hatcher, \emph{Homological stability for automorphism groups of free
  groups}, Comment. Math. Helv. \textbf{70} (1995), no.~1, 39--62. \MR{1314940}

\bibitem[HMNP]{HMNP}
Zachary Himes, Jeremy Miller, Sam Nariman, and Andrew Putman, \emph{The free
  factor complex and the dualizing module for the automorphism group of a free
  group}, Preprint, to appear in {IMRN},
  \url{https://arxiv.org/abs/2203.03102v3}.

\bibitem[HV98a]{HatcherVogtmannCerf}
Allen Hatcher and Karen Vogtmann, \emph{Cerf theory for graphs}, Journal of the
  London Mathematical Society \textbf{58} (1998), no.~3, 633--655.

\bibitem[HV98b]{HVfreeFactors}
\bysame, \emph{The complex of free factors of a free group}, Quart. J. Math.
  Oxford Ser. (2) \textbf{49} (1998), no.~196, 459--468. \MR{1660045}

\bibitem[HV98c]{HVrat}
\bysame, \emph{Rational homology of {${\rm Aut}(F_n)$}}, Math. Res. Lett.
  \textbf{5} (1998), no.~6, 759--780. \MR{1671188}

\bibitem[HW05]{HWboundary}
Allen Hatcher and Nathalie Wahl, \emph{Stabilization for the automorphisms of
  free groups with boundaries}, Geom. Topol. \textbf{9} (2005), 1295--1336.
  \MR{2174267}

\end{thebibliography}

\end{document}